\newcommand{\Q}{\mathbb{Q}}
\newcommand{\Z}{\mathbb{Z}}
\DeclareMathOperator{\Gal}{Gal}
\DeclareMathOperator{\GL}{GL}
\begin{document}

\title{On the number of elliptic curves with prescribed isogeny or torsion group over number fields of prime degree}
\author{Filip Najman}
\date{}
\maketitle
\begin{abstract}
Let $p$ be a prime and $K$ a number field of degree $p$. We determine the finiteness of the number of elliptic curves, up to $\overline K$-isomorphism, having a prescribed property, where this property is either that the curve contains a fixed torsion group as a subgroup, or that it has a cyclic isogeny of prescribed degree.
\end{abstract}
\textbf{Keywords} Torsion Group, Elliptic Curves, Number Fields\\
\textbf{Mathematics Subject Classification} (2010) 11G05, 11G18, 14H52

\section{Introduction}

Our goal is to study the number of elliptic curves over number fields of prime degree $K/\Q$, up to $\overline K$-isomorphism, having a prescribed property. This property will be one of the following:
\begin{itemize}
\item[1)] The curve's group of $K$-rational points contains a subgroup isomorphic to a fixed torsion group $T$.

\item[2)] The curve admits a cyclic $n$-isogeny defined over $K$, for some prescribed integer $n$.

\end{itemize}

Our methods will actually solve the first problem for number fields of degree $d$, where $d$ is not divisible by any of 2, 3 or 5.

It is clear that, when counting the number of elliptic curves over $K$ with a cyclic $n$-isogeny, one has to count up to $\overline K$-isomorphism, since if $E/K$ has a cyclic $n$-isogeny, then so does every quadratic twist of $E$ (in other words, the modular curve $Y_0(n)$ classifying cyclic $n$-isogenies is a coarse moduli space).

On the other hand, it is not clear whether it is better to count elliptic curves with prescribed torsion up to $K$-isomorphism or $\overline K$-isomorphism. Since we will mostly be concerned with the finiteness of the number of curves with given torsion, and since (see for example \cite[Lemma 5.5]{mr}) the number of elliptic curves over $K$ whose group of $K$-rational points contains a subgroup isomorphic to $T$ is finite up to $\overline K$-isomorphism if and only if it is finite up to $K$-isomorphism, we see that the choice of the isomorphism does not really matter.

Thus, for simplicity's sake, we will always count elliptic curves up to $\overline K$-isomorphism, and we will do so, usually without mention, throughout the paper.

 The number of curves will naturally depend on the prescribed property. Let $m|n$. Folowing \cite{km}, we define
 $$\Gamma_1(m,n)=\left\{\begin{pmatrix} a&b\\ c&d\end{pmatrix} \in \GL_2(\Z)| a-1\equiv c\equiv 0\text{ mod } n,\ b\equiv d-1\equiv 0\text{ mod } m\right\},$$
 and let $X_1(m,n)$ be $\Gamma_1(m,n)\backslash \mathcal H^*$. The curve $X_1(m,n)$ is defined over $\Q(\zeta_m)$. Let $Y_1(m,n)=X_1(m,n)\backslash \{cusps\}$. The moduli interpretation of $Y_1(m,n)$ is that its $K$-rational points classify isomorphism classes of the triples $(E, P_m, P_n)$, where $E$ is an elliptic curve (over $K$) and $P_m$ and $P_n$ are torsion points (over $K$) which generate a subgroup isomorphic to $\Z/m \Z \oplus \Z/n \Z$. Note that $X_1(1,n)=X_1(n)$ and $Y_1(n)=Y_1(1,n)$. Similarly $X_0(n)=\Gamma_0(n) \backslash \mathcal H^*$ and $Y_0(n)=X_0(n)$ $\backslash \{cusps\}$. The moduli interpretation of $Y_0(n)$ is that its $K$-rational points classify isomorphism classes of pairs $(E,C)$, where $E/K$ is an elliptic curve and $C$ is a cyclic (Gal$(\overline K/K)$-invariant) subgroup of $E$.

 Let $Y$ be the modular curve ($Y_1(m,n)$ or $Y_0(n)$) corresponding to the property we are studying, and $X$ its compactification. It is clear that the number of elliptic curves with the desired property over $K$ is positive and finite if and only if $0<|Y(K)|<+\infty$. Hence, we will from now on, express our problem in determining when $0<|Y(K)|<+\infty$, or equivalently $|\{K\text{-rational cusps of }X\}|<|X(K)|<+\infty$.

The key value that will determine whether $0<|Y(K)|<+\infty$ is its genus. If $Y$ is a genus 0 curve, then $Y(K)$ has either none or infinitely many points, thus implying that there are none or infinitely many curves with the given property. 

In particular, for $X=X_1(m,n)$ of genus 0 and $m>2$, that is for $(m,n)=(3,3),$ $(3,6),$ $(4,4),$ or $(5,5)$, $|Y(K)|=\infty$ if $K$ contains $\zeta_m$, an $m$-th primitive root of unity, and zero otherwise. One direction follows from the Weil pairing, and the other from the fact that the curve $X_1(m,n)$ has a $\Q(\zeta_m)$-rational point (and hence infinitely many) for all of the listed pairs.

 On the other hand, if $X$ is of genus $\geq 2$, then by Faltings' theorem \cite{fal}, $X(K)$ has only finitely many points. Thus, we see that we are left only with the genus 1 case.

 In Section 2 we study $|Y_1(m,n)(K)|$; combining the obtained results with the results of \cite{kn} and \cite{naj}, we solve this problem for number fields of prime degree completely.



 In Section 3 we study the number of points on $Y_0(n)$, for a prescribed $n$, over prime degree fields $K$.


 \section{Number of elliptic curves with prescribed torsion}
 \label{sec:tor}
  As stated in the introduction, we need to study the modular curves $X_1(m,n)$ of genus 1. One can see in \cite[Theorem 2.6, Proposition 2.7]{jkp} that $X_1(m,n)$ is of genus 1 only for $(m,n)=(1,11),\ (1,14),\ (1,15),\ (2,10),\ (2,12),$ $(3,9),\ (4,8)$ or $(6,6)$. Note that if $K$ is a field of odd degree, $m>2$ and $m|n$, then $|Y_1(m,n)(K)|=0$, as otherwise $\Q (\zeta_m )$ would have to be a subfield of $K$ (because of the Weil pairing \cite[Corollary 3.11]{was}), which is impossible. Thus we can ignore the cases $(3,9),\ (4,8)$ and $(6,6)$, implying that we are left with the cases
 \begin{equation}
 (m,n)\in \{ (1,11),\ (1,14),\ (1,15),\ (2,10),\ (2,12)\}.
 \label{popis}
  \end{equation}

  The number of points on $Y_1(m,n)$ was determined by Kamienny and the author \cite{kn} over all quadratic fields and by the author \cite{naj} over all cubic fields. In both cases there were exceptional cases where, there are a few fields $K$ over which $0<|Y_1(m,n)(K)|<+\infty$.  Note also that in \cite{naj2} it is proven that there are infinitely many such quartic fields. In this paper, we prove that there are no such number fields of degree $d$, where $d$ is not divisible by any of $2, 3$ or 5. For $d=5$, we find that $|Y_1(11)(\Q(\zeta_{11})^+|=15$ (where $\Q(\zeta_{11})^+$ is the maximal real subfield of $\Q(\zeta_{11})$) and $|Y_1(m,n)(K)|=0$ or $\infty$ for all other triples $(K,m,n)$, where $K$ is a quintic field and $(m,n)$ are listed in \eqref{popis}.

 We state our results in the theorem.

 \newtheorem{tm}{Theorem}
 \begin{tm}
 \label{t1} \newtheorem{cor}[tm]{Corolaary}
 \begin{itemize}
 \item[a)] Let $d$ be a positive integer coprime to $6$ and $K$ a number field of degree $d$. Then
 \begin{equation}
 \label{eq1}
 |Y_1(m,n)(K)|=\begin{cases} 0 \text{ or }\infty & \text{if } (m,n)\in \{(1,14), (1,15), (2,10), (2,12)\} \\
                            0  & \text{if } (m,n)\in \{(3,9), (4,8), (6,6)\} \end{cases}
\end{equation}

\item[b)] Let $K$ a number field of degree $d$. Then
\begin{equation}
\label{eq11}
|Y_1(11)(K)|= \begin{cases} 0 \text{ or } \infty & \text{if } 3,4,5\nmid d \text{ or if } d=5 \text{ and }K\neq \Q(\zeta_{11})^+\\
                            15 & \text{if } K=\Q(\zeta_{11})^+\end{cases}
\end{equation}

\end{itemize}
\end{tm}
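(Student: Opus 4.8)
The plan is to use the reduction from the introduction to the five genus-one modular curves $X_1(m,n)$ with $(m,n)$ in \eqref{popis}. The three cases $(3,9),(4,8),(6,6)$ are immediate: a non-cuspidal $K$-point would force $\Q(\zeta_m)\subseteq K$ with $m\in\{3,4,6\}$, hence $2\mid d$, contradicting $\gcd(d,6)=1$. Each of the remaining five curves is defined over $\Q$, and fixing a $\Q$-rational cusp as origin turns it into an elliptic curve $E_{m,n}/\Q$; from the tables (conductors $11,14,15$, and small conductors for $(2,10),(2,12)$) one reads off that each $E_{m,n}$ has Mordell--Weil rank $0$ over $\Q$ and that $E_{m,n}(\Q)$ consists precisely of its $\Q$-rational cusps --- equivalently $Y_1(m,n)(\Q)=\emptyset$, part of Mazur's theorem. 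I would also record the isogeny class of each $E_{m,n}$.

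Everything then hinges on a dichotomy for a number field $K/\Q$ of degree $d$: if $\mathrm{rank}\,E_{m,n}(K)\geq 1$ then $X_1(m,n)(K)$, and hence $Y_1(m,n)(K)$, is infinite; if $\mathrm{rank}\,E_{m,n}(K)=0$ then $E_{m,n}(K)$ is finite and it remains to control its torsion. For the latter I would use a short group-theoretic lemma: when $d$ is coprime to $6$ (resp.\ $3\nmid d$ and $4\nmid d$), any point of $2$-power or $3$-power order on an elliptic curve over $K$ is already defined over $\Q$, because the degree of its field of definition divides both $d$ and a $\{2,3\}$-number; and a point of order a prime $\ell\geq 5$ over a field of degree coprime to $6$ forces, via its Galois orbit in $E_{m,n}[\ell]$ and passage to a quotient, that $E_{m,n}$ has a $\Q$-rational cyclic subgroup of order $\ell$, which is excluded by the isogeny class. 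For $(m,n)\in\{(1,14),(1,15),(2,10),(2,12)\}$ this gives $E_{m,n}(K)_{\mathrm{tors}}=E_{m,n}(\Q)_{\mathrm{tors}}$; since moreover the non-$\Q$-rational cusps of these four curves are defined over $\Q(\zeta_N)^+$, of degree $3,4,2,2$ respectively and hence over no degree coprime to $6$ other than $\Q$ itself, $X_1(m,n)(K)$ gains no new points whatsoever, so it is all cusps when the rank is $0$ and infinite otherwise; combined with the dichotomy this is \eqref{eq1}.

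For $(m,n)=(1,11)$, $X_1(11)$ is the curve $11a3$, and $X_1(11)(\Q)\cong\Z/5\Z$ consists of its five $\Q$-rational ($\infty$-type) cusps, which are exactly the kernel of the natural $5$-isogeny $\pi\colon X_1(11)\to X_0(11)$. The key observation is that each of the five remaining ($0$-type) cusps of $X_1(11)$ is a point of \emph{order $25$} on this elliptic curve: they map under $\pi$ onto a generator of the rational cuspidal subgroup $\Z/5\Z\subset X_0(11)$, and $\pi^{-1}(\Z/5\Z)$ is cyclic of order $25$ rather than the full $5$-torsion, because $\pi(X_1(11)[5])$ equals the Shimura subgroup of $X_0(11)$ --- on which Galois acts by $\chi_{\mathrm{cyc}}$ --- which is distinct from the cuspidal subgroup. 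As the $0$-type cusps are defined over $\Q(\zeta_{11})^+$, this already gives $\Z/25\Z\hookrightarrow X_1(11)(\Q(\zeta_{11})^+)$. A case analysis of the remaining points of order $25$ (those not of this ``cuspidal'' type drag $\Q(\zeta_5)$, of degree $4$, or a degree-$25$ field into their field of definition) then shows that for $K$ with $3,4,5\nmid d$, or for any quintic field $K\neq\Q(\zeta_{11})^+$, one has $X_1(11)(K)_{\mathrm{tors}}=\Z/5\Z$, so the dichotomy yields $0$ or $\infty$. Over $K=\Q(\zeta_{11})^+$ I would conclude by showing $X_1(11)(K)$ has rank $0$ and torsion exactly $\Z/25\Z$ --- reduction at the prime above $2$, with residue field $\F_{32}$ and $|E(\F_{32})|=25$, bounds the torsion, and rank $0$ follows from a $5$-isogeny descent (or from non-vanishing of the relevant twisted $L$-values) --- so that $|X_1(11)(K)|=25$; all ten cusps (the five $\Q$-rational ones forming the $\Z/5\Z$, the five $0$-type ones a single coset of points of order $25$) are then $K$-rational, leaving exactly $25-10=15$ non-cuspidal $K$-points, which is \eqref{eq11}.

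The main obstacle I anticipate is proving $\mathrm{rank}\,X_1(11)(\Q(\zeta_{11})^+)=0$: a direct descent over a quintic field is impractical, so one must exploit the rational $5$-isogeny $\pi$ to run a Selmer-group descent, or invoke non-vanishing of $L(11a3,\chi,1)$ for the quintic characters $\chi$ of conductor $11$. A secondary delicate point is confirming that the order-$25$ ``cuspidal'' point lives over $\Q(\zeta_{11})^+$ and over no other quintic field, which rests on the precise Galois action on the cuspidal and Shimura subgroups of $X_0(11)$ together with the fact that the extension it cuts out is unramified outside $11$.
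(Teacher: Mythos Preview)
Your overall strategy is sound, and for part (a) your $\{2,3\}$-power torsion argument (the degree of $\Q(P)$ divides both $d$ and $|\GL_2(\Z/2^a3^b\Z)|$, a $\{2,3\}$-number, hence equals $1$) is in fact cleaner than the paper's separate case split on $2$-torsion. However, your handling of primes $\ell\geq 5$ contains a genuine gap: from a point $P\in E_{m,n}(K)$ of order $\ell$ you only obtain a $K$-rational line $\langle P\rangle\subset E_{m,n}[\ell]$, not a $\Q$-rational one. The $G_\Q$-orbit of $\langle P\rangle$ in $\PP(E_{m,n}[\ell])$ has size dividing $d$, but if that size exceeds $1$ the orbit already spans all of $E_{m,n}[\ell]$ and no $\Q$-rational cyclic subgroup emerges; ``passage to a quotient'' does not repair this. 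The paper closes the gap differently and computationally: for each of $X_1(14),X_1(15),X_1(2,10),X_1(2,12)$ it verifies (in SAGE) that $\rho_\ell$ is surjective for every odd prime $\ell$ outside $\{3\}$ (resp.\ $\{2\}$), so that $\psi_\ell$ is irreducible of degree $(\ell^2-1)/2$, a multiple of $4$; hence no $\ell$-torsion appears over any field of odd degree. It then factors $\psi_9$ explicitly for the two curves with non-surjective $\rho_3$ to finish. You would need either this surjectivity input or a reduction-mod-$p$ bound to complete your argument.

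For part (b) your approach is genuinely different from the paper's. The paper simply factors the $25$-division polynomial of $X_1(11)$, finds that its only prime-degree factors beyond $\psi_5$ are two quintics both generating $\Q(\zeta_{11})^+$, computes $X_1(11)(\Q(\zeta_{11})^+)_{\mathrm{tors}}\simeq\Z/25\Z$ directly, and obtains rank $0$ by an ordinary $2$-descent over $\Q(\zeta_{11})^+$ --- not a $5$-isogeny descent or an $L$-value argument. Your route through the $5$-isogeny $\pi\colon X_1(11)\to X_0(11)$, the Shimura versus cuspidal subgroup of $X_0(11)$, and reduction at the prime above $2$ explains \emph{why} $\Q(\zeta_{11})^+$ appears and why the torsion jumps to $\Z/25\Z$, at the cost of invoking the Galois action on those subgroups; the paper's route is a black-box computation but is shorter and avoids that machinery. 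Both lead to the same $25-10=15$ count, using the paper's cusp tally of $10$ over $\Q(\zeta_{11})^+$.
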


\textbf{Remark 1}.

An elliptic curve over $\Q$ has one of the following 15 torsion groups (see \cite{maz} for the proof):
$$\Z / n \Z, \text{ where }n=1\ldots 10, 12$$
\begin{equation}
\Z /2\Z \oplus \Z / 2n\Z \text{ where }n=1\ldots 4,
\label{eq2}
\end{equation}
For any number field $K$, there are infinitely many elliptic curves containing any of the groups from (\ref{eq2}). All of the groups (\ref{eq2}) are parameterized by modular curves of genus 0. On the other hand, the groups from (\ref{eq1}) and (\ref{eq11}) are parameterized by curves of genus 1.

We can summarize our results in terms of elliptic curves with prescribed torsion groups. The following corollary follows trivially from Theorem \ref{t1} and Remark 1.

\newtheorem{cor3}[tm]{Corollary}
\begin{cor3}
\label{cr}
 Let $d$ be a positive integer not divisible by any of $2$, $3$ or $5$, and let be $K$ a number field of degree $d$. If $T$ is one of the following groups
$$\Z / n \Z, \text{ where }n=1,\ldots , 12, 14, 15,$$
\begin{equation}
\Z /2\Z \oplus \Z / 2n\Z \text{ where }n=1,\ldots ,6,
\label{ej3}
\end{equation}
then there are either none or infinitely many elliptic curves over $K$ containing $T$ as a subgroup. If $T$ is any other finite group, then there are only finitely many (maybe $0$) elliptic curves containing $T$ as a subgroup.
\end{cor3}

\medskip

 Let $Y_1(m,n)$ be of genus 1. For $0<|Y_1(m,n)(K)|<+\infty$ to be true, the following two conditions have to hold.
\begin{itemize}

\item[(C1)] $rank(X_1(m,n)(K))=0$.

\item[(C2)] At least one of the torsion points of $X_1(m,n)(K)$ is not a cusp (note that this implies $|X_1(m,n)(K)|_{tors}>|X_1(m,n)(\Q)|_{tors}$).
\end{itemize}

To prove Theorem \ref{t1}, we will use division polynomials and Galois representations.

We denote by $\psi_n$ the $n$-th division polynomial, which has the property that, for a point $P\neq \mathcal O$ on an elliptic curve in Weierstrass form, $\psi_n(x(P))=0$ if and only if $nP=0$. For a definition and more information on division polynomials see \cite{was}. Note that the field of definition of $P$ will be either the number field $F$ obtained by adjoining a root of $\psi_n$ or a quadratic extension of $F$.

Let $E[n]$ denote the $n$-torsion subgroup of $E/\Q$ over $\overline \Q$ and let $\Q(E[n])$ be the $n$-th division field of $E$. The Galois group $\Gal(\overline \Q /\Q)$ acts on $E[n]$ and gives rise to an embedding $\rho_n: \Gal(\Q(E[n]/\Q)  \hookrightarrow \GL_2(\Z/n\Z)$ called a \emph{Galois representation associated to $E[n]$}. Serre's Open Image Theorem \cite{ser} tells us that for an elliptic curve without complex multiplication (all of the $X_1(m,n)$-s that we study do not have complex multiplication) this embedding is surjective for all but finitely many primes. The primes for which this embedding is possibly not surjective can be computed in SAGE \cite{sag} with the {\sf non\_surjective()} function, which uses the bounds for the non-surjective primes from \cite{co}.

To prove Theorem \ref{t1}, we will find the fields $K$ of prime degree over which $X_1(m,n)(K)_{tors}$ is strictly larger than $X_1(m,n)(\Q)_{tors}$, where $(m,n)$ is a pair from the list (\ref{popis}).

\begin{proof}[Proof of Theorem \ref{t1}]

Before proceeding with the details of the proof, we sketch the strategy of the proof.

Let $E/ \Q$ be an elliptic curve without complex multiplication. Let $l$ be an odd prime such that the Galois representation $\rho_l$ is surjective. This implies that the division polynomial $\psi_l$ is irreducible. The field of definition $F$ of a torsion point of order $l$ has to contain the field obtained by adjoining a root of $\psi_l$ to $\Q$, which is a field of degree $(l^2-1 )/2$. In particular, the degree of $F$ must be divisible by 4. It follows that if $K$ is any number field of prime degree then $E(K)$ cannot contain a point whose order is divisible by any odd prime $l$ for which $\rho_l$ is surjective.

If $E(\Q)$ has at least one point of order 2 and $K$ is a number field of odd degree, then by \cite[Lemma 1]{naj}, the 2-Sylow subgroup of $E(K)$ is equal to the 2-Sylow subgroup of $E(\Q)$. Suppose now $E(\Q)[2]=0$. One can then write $E(\Q)$ in short Weierstrass form $y^2=f(x)$, where $f(x)$ is an irreducible degree 3 polynomial, and conclude that any number field containing a point of order 2 has to have the field obtained by adjoining a root of $f$ to $\Q$ as a subfield. Thus $E(\Q)[2]=0$ implies $E(K)[2]=0$ when the degree of $K$ is not divisible by $3$.

Combining these two cases, we see that for any number field $K$ of degree coprime to 6, the 2-Sylow subgroup of $E(K)$ is equal to the 2-Sylow subgroup of $E(\Q)$.

As the $X_1(m,n)$ we study do not have complex multiplication, it follows that condition (C2) can be true only if there exists a prime $l$ such that $\rho_l$ is non surjective and a prime degree field $K$ contains a root of $\psi_l$.

For all the $X_1(m,n)$-s we study, we list the primes (computed in SAGE) for which the Galois representation $\rho_l$ is not surjective in the table below.
\begin{center}
\begin{tabular}{|c|c|c|}
\hline
\text{Modular curve} & \text{Cremona Label \cite{cre}} &  \text{primes $l$ for which $\rho_l$ is not surjective}\\
\hline
$X_1(11)$ & 11a3 & 5\\
\hline
$X_1(14)$ & 14a4 & 2, 3\\
\hline
$X_1(15)$ & 15a8 & 2\\
\hline
$X_1(2,10)$ & 20a2  & 2, 3\\
\hline
$X_1(2,12)$ & 24a4 & 2 \\
\hline
\end{tabular}

\medskip

Table 1.
\end{center}

We start with $X_1(11)$ and the prime $5$. The curve elliptic curve $X_1(11)$ has the following Weierstrass model
$$y^2-y=x^3-x^2$$
and $X_1(11)(\Q)\simeq \Z/5\Z$ and all the rational torsion points are cusps. Since no number field of odd degree contains $\Q(\zeta_5)$ as a subfield, $X_1(11)(F)[5]\simeq \Z/5\Z$, for all number fields $F$ of odd degree. We now factor the 25-division polynomial of $X_1(11)$ and obtain the factorization $$\psi_{25}=\psi_5f_5g_5f_{20}g_{20}f_{250},$$
where $f_n$ and $g_n$ are irreducible polynomials of degree $n$. We need to examine only the fields generated by $f_5$ and $g_5$, as the zeroes of $\psi_5$ correspond to the points of order 5, which are either rational (and thus cusps) or are defined over a field of even degree, and as $f_{20},\ g_{20}$ and $f_{250}$ generate fields of even degree.

We compute that $f_5$ and $g_5$ generate the same field, $\Q(\zeta_{11})^+$. We compute $X_1(11)(\Q(\zeta_{11})^+)_{tors}\simeq \Z /25 \Z$ and $rank(X_1(11)(\Q(\zeta_{11})^+))=0$ (via 2-descent). The curve $X_1(11)$ has 10 cusps over $\Q(\zeta_{11})^+$ (see \cite[Example 9.3.5]{dim}), thus there are 15 non-cuspidal points on $X_1(11)(\Q(\zeta_{11})^+)$. As $\Z / 11 \Z$ has 10 generators and each point on $X_1(11)(\Q(\zeta_{11})^+)$ corresponds to a pair $(E,\pm P)$, where $P$ is a point of order 11, we see that this implies that there are $15/5=3$ elliptic curves with a point of order 11 over $\Q(\zeta_{11})^+$. We also conclude that $\Q(\zeta_{11})^+$ is the only quintic field $K$ such that there is a positive finite number of elliptic curves with a $K$-rational point of order 11 and that if $K$ is a number field of degree not divisible by 2, 3 or 5 (as to not contain $\Q(\zeta_{11})^+$) then there are either none or infinitely many elliptic curves with a $K$-rational point of order 11.

We move on to $X_1(14)$ and the prime 3. The elliptic curve $X_1(14)$ has a Weierstrass model
$$y^2 + xy + y = x^3 - x$$
and $X_1(14)(\Q)\simeq \Z/6\Z$, where all the rational torsion points are cusps.
We factor its 9-division polynomial $\psi_9$ and obtain
$$\psi_9=\psi_3 f_3f_6f_{27},$$
where $f_n$ is an irreducible polynomial of degree $n$. As in the previous case, the zeroes of $\psi_3$ do not concern us. From the factorization of $\psi_9$, we see that there are no points on $X_1(14)$ of order 9 over any number field of degree not divisible by 3. Thus $X_1(14)$ has no non-rational torsion points over any number field of degree not divisible by 2 or 3. Note that points on $X_1(m,n)$ over cubic fields have already been dealt with in \cite{naj}.

The case $X_1(2,10)$ and the prime 3 remain. The elliptic curve $X_1(2,10)$ has a Weierstrass model
$$y^2 = x^3 + x^2 - x$$
and $X_1(2,10)(\Q)\simeq \Z/6\Z$, where all the rational torsion points are cusps. We factor its 9-division polynomial $\psi_9$ and obtain
$$\psi_9=\psi_3f_9f_{27},$$
where $f_n$ is an irreducible polynomial of degree $n$. Similarly as in the previous case we can conclude that there are no points on $X_1(2,10)$ of order 9 over any number field of degree not divisible by 9. Thus $X_1(2,10)$ has no non-rational torsion points over any number field of degree not divisible by 2 or 9.

Combining all the cases we have proven Theorem \ref{t1}. \end{proof}

\textbf{Remark 2.} Let $\alpha$ be the root of the polynomial $x^5 - 18x^4 + 35x^3 - 16x^2 - 2x + 1$ which generates $\Q(\zeta_{11})^+$. The 3 elliptic curves over $\Q(\zeta_{11})^+$ with a point of order 11 are

$$y^2 + \frac{4\alpha^4 - 73\alpha^3 + 150\alpha^2 - 96\alpha +
    27}{11}xy + \frac{3\alpha^4 - 19\alpha^3 + 19\alpha^2 - 6\alpha + 1}{11}y =$$
\begin{equation}
\label{kr1}
    = x^3 +
\frac{3\alpha^4 - 19\alpha^3 + 19\alpha^2 - 6\alpha + 1}{11}x^2,
\end{equation}
$$
y^2 + \frac{2\alpha^4 - 32\alpha^3 + 68\alpha - 8}{11}xy +
\frac{-6\alpha^4 + 111\alpha^3 - 261\alpha^2 + 190\alpha - 33}{11}y =$$

\begin{equation}= x^3 + \frac{-6\alpha^4
\label{kr2}
    + 111\alpha^3 - 261\alpha^2 + 190\alpha - 33}{11}x^2
\end{equation}
and

 $$y^2 + \frac{xy}{11}(45\alpha^4 - 799\alpha^3 + 1379\alpha^2 -
    372\alpha - 179)+ \frac{y}{11}(372\alpha^4 - 6605\alpha^3 + $$
\begin{equation} 11404\alpha^2 - 3157\alpha -1519) = x^3 +
\frac{x^2}{11}(372\alpha^4 - 6605\alpha^3 + 11404\alpha^2 - 3157\alpha -
    1519).
    \label{kr3}
\end{equation}
The curves (\ref{kr1}), (\ref{kr2}), (\ref{kr3}) have $j$-invariants $-11\cdot131^3$, $-2^{15}$ and $-11^2$ respectively. The curve (\ref{kr2}) has complex multiplication by the ring of integers $\Q(\sqrt{-11})$, while the other two curves do not have complex multiplication. The elliptic curves above were constructed using the formulas from \cite{sut}.

\medskip

\section{Number of elliptic curves with a cyclic $n$-isogeny}

As we are studying elliptic curves with a cyclic $n$-isogeny, we are led to the study of the modular curves $X_0(n)$.

If $X_0(n)$ is of genus 0 (when $n\leq 10$, $n=12,$ $13,$ $16,$ $18$ or 25), then there are already infinitely many elliptic curves with a cyclic $n$-isogeny over $\Q$.

Let $S$ be the set of $n$-s such that $X_0(n)$ is of genus 1, i.e. \begin{equation}
\label{s}
S=\{11,14,15,17,19,20,21,24,27,32,36,49\}.
\end{equation}
Note that the Cremona label of $X_0(n)$ is $na1$ and, as listed in Cremona's tables \cite{cre}, the curves $X_0(n)$ have rank 0 over $\Q$ and hence $|Y_0(n)(\Q)|$ is finite for all $n\in S$. The curve $Y_0(n)(\Q)$ has 1 point for $n=19$ and 27, 2 points for $n=14$ and $17$, 3 points for $n=11$, 4 points for $n=15$ and 21 and 0 points for the remaining genus 1 cases \cite{maz2}.

As in the previous section, we will be interested in finding the fields $K$ of prime degree for which the elliptic curve $X_0(n)$ has larger torsion than over $\Q$, but still has rank 0.

We state our results in the following theorem.
\newtheorem{tm3}[tm]{Theorem}
\begin{tm3}
\label{t3}
The only pairs $(n,K)$, where $n\in S$ and $K$ is a number field of prime degree such that $|Y_0(n)(\Q)|<|Y_0(n)(K)|<+\infty$ are listed in the table below.

\begin{center}
\begin{tabular}{|c|c|c|}
\hline
$n$ & $K$ & $|Y_0(n)(K)|$ \\
\hline
$14$ & $\Q (\sqrt{-7})$ & $8$\\
\hline
$14$ & $\Q (\sqrt{-3})$ & $14$\\
\hline
$15$ & $\Q (\sqrt{5})$ &  $12$\\
\hline
$15$ & $\Q (i)$ &  $12$\\
\hline
$20$ & $\Q(i)$ &  $6$\\
\hline
$21$ & $\Q(\sqrt{-3})$ & $12$\\
\hline
$27$ & $\Q(\sqrt{-3})$ & $7$\\
\hline
$32$ & $\Q(i)$ & $4$\\
\hline
$49$ & $\Q(\sqrt{-7})$ & $2$\\
\hline

\end{tabular}

\medskip

Table 2.
\end{center}
\end{tm3}

\begin{proof}
As the general strategy of the proof is similar as the proof of Theorem \ref{t1}, we will leave some details out. We will not explicitly write the Weierstrass models of $X_0(n)$, but note that they can be found in \cite{yan}.

The proof will be different for elliptic curves with complex multiplication and those without. We first deal with those without complex multiplication using the same general strategy as in the proof of Theorem \ref{t1}.

We first deal with the cases when $X_0(n)$ does not have complex multiplication, i.e. $n\in \{11,14,15,17,19,20,21,24\}$. As in the proof of Theorem \ref{t1}, for each $X_0(n)$ we compute primes $l$ for which the Galois representation $\rho_l$ is not surjective. We list the obtained results in the table below.

\begin{center}
\begin{tabular}{|c|c|}
\hline
\text{Modular curve} & \text{primes $l$ for which $\rho_l$ is not surjective}\\
\hline
$X_0(11)$ & 5\\
\hline
$X_0(14)$ & 2, 3\\
\hline
$X_0(15)$ & 2\\
\hline
$X_0(17)$ & 2\\
\hline
$X_0(19)$ & 3 \\
\hline
$X_0(20)$ & 2, 3 \\
\hline
$X_0(21)$ & 2 \\
\hline
$X_0(24)$ & 2 \\
\hline
\end{tabular}

\medskip

Table 3.
\end{center}
We start by proving that there is no number field $K$ of prime degree such that
$|X_0(11)(\Q)|<|X_0(11)(K)|<+\infty$. We factor the $5$-division polynomial of $X_0(11)$ and obtain that $x^2 + x - 29/5$ is the only factor of prime degree, but $X_0(11)(\Q(\sqrt{5}))= X_0(11)(\Q)\simeq \Z / 5\Z$.

The $25$-division polynomial of $X_0(11)$ has no factors of prime degree, except the ones corresponding to the $5$-division polynomial. Thus, we have dealt with $X_0(11)$.

By examining $X_0(14)$ over the number fields generated by the factors of prime degree of the $4$-division and $9$-division polynomials, we obtain
$$X_0(14)(\Q(\sqrt{-7}))_{tors} \simeq \Z / 2\Z \oplus \Z / 6\Z \text{ and } X_0(14)(\Q(\sqrt{-3}))_{tors} \simeq \Z / 3\Z \oplus \Z / 6\Z$$
and $X_0(14)(K)_{tors}=X_0(14)(\Q)_{tors}$ for all other fields $K$ of prime degree. We compute that the rank of $X_0(14)$ over both $\Q(\sqrt{-3})$ and $\Q(\sqrt{-7})$ is 0. Thus $Y_0(14)(\Q(\sqrt{-7}))$ has 8 points ($|X_0(14)(\Q(\sqrt{-7}))|$ minus the 4 rational cusps) and $Y_0(14)(\Q(\sqrt{-3}))$ has 14 points.

By examining $X_0(15)$ over the number fields generated by the factors of prime degree of the $8$-division polynomial, we obtain
$$X_0(15)(\Q(i))_{tors} \simeq \Z / 4\Z \oplus \Z / 4\Z \text{ and } X_0(15)(\Q(\sqrt{5}))_{tors} \simeq \Z / 2\Z \oplus \Z / 8\Z$$
and $X_0(15)(K)_{tors}=X_0(15)(\Q)_{tors}$ for all other fields $K$ of prime degree. We compute that the rank of $X_0(15)$ over both $\Q(i)$ and $\Q(\sqrt{5})$ is 0. Thus, by removing the 4 rational cusps of $X_0(15)$, we obtain that $Y_0(15)(\Q(i))$ and $Y_0(15)(\Q(\sqrt{5}))$ have 12 points.

We factor the 8-division polynomial of $X_0(17)$ and obtain that the only number field $K$ of prime degree such that $X_0(17)$ has torsion points defined over $K$, but not over $\Q$, is $K=\Q(i)$. But we compute that the rank of $X_0(17)(\Q(i)$) is 1, and hence there are infinitely many elliptic curves with a cyclic 17-isogeny over $\Q(i)$.

We factor the 9-division polynomial of $X_0(19)$ and obtain that the only number field $K$ of prime degree such that $X_0(19)$ has torsion points defined over $K$, but not over $\Q$, is $K=\Q(\sqrt{-3})$. But, as in the previous case, the rank of $X_0(19)(\Q(\sqrt{-3}))$ is 1, and hence there are infinitely many elliptic curves with a cyclic 19-isogeny over $\Q(\sqrt{-3})$.

By examining $X_0(20)$ over the number fields generated by the factors of prime degree of the $4$-division and $9$-division polynomials, we obtain
$$X_0(20)(\Q(i))_{tors} \simeq \Z / 2\Z \oplus \Z / 6\Z$$
and $X_0(20)(K)_{tors}=X_0(20)(\Q)_{tors}$ for all other fields $K$ of prime degree. We compute that the rank of $X_0(20)$ over $\Q(i)$ is 0, and hence $Y_0(20)(\Q(i))$ has 6 points (we subtract the 6 rational cusps of $X_0(20)$).

In a similar manner we obtain that the only field of prime degree $K$ such that $X_0(21)$ has torsion points defined over $K$, but not over $\Q$, is $K=\Q(\sqrt{-3})$, and that the rank of $X_0(21)(K)$ is 0. Hence $Y_0(21)(\Q(\sqrt{-3}))$ has 12 points (we subtract the 4 rational cusps of $X_0(21)$).

By examining $X_0(24)$ over the number fields generated by the factors of prime degree of the $8$-division, we obtain that $X_0(24)(K)_{tors}=X_0(24)(\Q)_{tors}$ for all number fields of prime degree.

For the remaining cases $n\in S$, $X_0(n)$ is a curve with complex multiplication, so we cannot apply the same methods as before. Instead we use following theorem which give a good description of the behavior of the torsion of an elliptic curve with complex multiplication defined over $\Q$ upon extensions.
\newtheorem{tmd}[tm]{Theorem}
\begin{tmd}\cite[Theorem 2]{dgu}
Let $E$ be an elliptic curve defined over $\Q$ with CM by an order of $K=\Q(\sqrt{-D})$ and $p$ an odd prime not dividing $D$. Let $F$ be a Galois number field not containing $K$. Then $E(F)[p]$ is trivial.
\label{tmd}
\end{tmd}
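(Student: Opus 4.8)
The plan is to use the very constrained shape of the mod-$p$ Galois representation of a CM curve. Write $G_\Q=\Gal(\overline\Q/\Q)$, $G_K=\Gal(\overline\Q/K)$, and let $\rho:=\rho_p\colon G_\Q\to\GL_2(\F_p)$ be the representation on $E[p]\cong\F_p^2$, so that $\Ker\rho=\Gal(\overline\Q/\Q(E[p]))$ and $\det\rho=\chi_p$, the mod-$p$ cyclotomic character. First I would pick $\theta\in\mathcal O$ with $\theta^2\in\Z_{<0}$ and $p\nmid N(\theta)$: one may take $\theta=f\sqrt{-D}$, where $f$ is the conductor of $\mathcal O$, since $\sqrt{-D}\in\mathcal O_K$ and $p\nmid f$ (a class-number-one order has conductor $1$, $2$ or $3$, and $f=3$ occurs only for the order of discriminant $-27$, where $D=3$, which $p\nmid D$ forbids as $p$ is odd). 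Then $\phi:=[\theta]$ induces an element of $\mathrm{End}_{\F_p}(E[p])=M_2(\F_p)$ with $\phi^2=-N(\theta)\,\mathrm{id}$ a nonzero scalar, so $\phi\in\GL_2(\F_p)$ and $C:=\F_p[\phi]^\times$ is a Cartan subgroup, split iff $-D$ is a square mod $p$. Since the CM is defined over $K$, $\rho(G_K)$ centralizes $\phi$, hence $\rho(G_K)\subseteq C$; since any $\sigma\in G_\Q\setminus G_K$ sends $\theta$ to $-\theta$, it conjugates $\phi$ to $-\phi$ (same centralizer $C$) but does not centralize $\phi$. Thus $G:=\rho(G_\Q)$ normalizes $C$ with $G\not\subseteq C$; comparing the indices $[G_\Q:G_K]=2$ and $[N(C):C]=2$ gives $\rho^{-1}(C)=G_K$, so $\Ker\rho\subseteq G_K$, i.e.\ $K\subseteq\Q(E[p])$, and $G\cap C=\rho(G_K)$.

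Next, suppose for contradiction that $E(F)[p]\neq 0$ for some Galois $F/\Q$ with $K\not\subseteq F$, and fix $0\neq P\in E(F)[p]$. Put $H:=\rho(G_F)$. Since $F/\Q$ is Galois, $H\trianglelefteq G$, and since $G$ normalizes $C$, the subgroup $H\cap C$ is also normal in $G$. The heart of the proof is the lemma: \emph{if $N\trianglelefteq G$, $N\subseteq C$, and $N$ fixes a nonzero vector, then $N=1$.} To see this, note $N$ fixes $P$ and, as $N$ commutes with $\phi$, also $\phi(P)$. If $P,\phi(P)$ are $\F_p$-independent they form a basis and $N=1$. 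Otherwise $P$ is a $\phi$-eigenvector, which forces $C$ to be split with eigenlines $\ell\ni P$ and $\ell'$; then each $n\in N$ is diagonal for this decomposition and fixes $P$, so $n=\mathrm{diag}(1,\mu)$; picking $g\in G\setminus C$, which necessarily swaps $\ell$ and $\ell'$, normality gives $gng^{-1}=\mathrm{diag}(\mu,1)\in N$, hence $\mu I=n\cdot gng^{-1}\in N$, and since $N$ fixes $P\neq 0$ we get $\mu=1$, i.e.\ $N=1$. Applying the lemma to $N=H\cap C$ (which fixes $P$) yields $H\cap C=1$, so $H$ embeds into $N(C)/C\cong\Z/2$ and $|H|\le 2$.

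If $H=1$, then $E[p]\subseteq E(F)$, so $K\subseteq\Q(E[p])\subseteq F$, contradicting $K\not\subseteq F$. If $|H|=2$, then $H$ is a normal subgroup of order $2$ of $G$, hence central in $G$; writing $H=\{1,g_0\}$ with $g_0\notin C$, conjugation by $g_0$ is the nontrivial order-$2$ automorphism of $C$ (Frobenius on $\F_{p^2}^\times$ in the non-split case, the coordinate swap on $\F_p^\times\times\F_p^\times$ in the split case), whose fixed subgroup is the group $Z$ of scalar matrices. As $g_0$ centralizes $G\cap C=\rho(G_K)$, this group lies in $Z$, so $\chi_p(G_K)=\det\rho(G_K)\subseteq\det(Z)=(\F_p^\times)^2$, which has index $\ge 2$ in $\F_p^\times$. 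But $\chi_p$ is surjective, so $\chi_p(G_K)$ has index $[\Q(\zeta_p)\cap K:\Q]\le 2$ in $\F_p^\times$; hence equality holds, $K\subseteq\Q(\zeta_p)$, and $K$ must be the unique quadratic subfield $\Q(\sqrt{p^*})$ of $\Q(\zeta_p)$, where $p^*=(-1)^{(p-1)/2}p$. Since $K$ is imaginary this forces $p\equiv 3\pmod 4$ and $K=\Q(\sqrt{-p})$, i.e.\ $D=p$ — contradicting $p\nmid D$. This would complete the proof.

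The step I expect to be the real obstacle is the eigenvector sub-case of the lemma: when the rational point $P$ lies on one of the two $\mathcal O$-stable lines, no counting inside the abelian group $C$ suffices, and one genuinely needs both halves of the hypothesis — that $F/\Q$ is Galois (to get $N\trianglelefteq G$) and that $\rho(G_\Q)\not\subseteq C$ (so that some Galois element swaps the two eigenlines). Correctly locating the two appearances of the Galois hypothesis (for $H\trianglelefteq G$, and again for the centrality of $H$ when $|H|=2$) is the only delicate bookkeeping; the remaining arguments are routine linear algebra over $\F_p$.
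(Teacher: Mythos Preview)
The paper does not give its own proof of this statement: Theorem~\ref{tmd} is quoted verbatim from \cite[Theorem~2]{dgu} and used as a black box in the proof of Theorem~\ref{t3}. So there is nothing to compare against on the paper's side.

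Your argument is correct and self-contained. The essential input --- that for a CM curve $E/\Q$ the image of $\rho_p$ lies in the normaliser of a Cartan subgroup $C$ with $\rho^{-1}(C)=G_K$ --- is set up cleanly via the endomorphism $\phi=[\theta]$, and your verification that one can choose $\theta\in\mathcal O$ with $p\nmid N(\theta)$ (using that the thirteen class-number-one orders have conductor $1$, $2$ or $3$) is accurate. The lemma disposing of $H\cap C$ is the heart of the matter and is handled correctly in both the non-split and the split eigenvector sub-cases; the use of an element $g\in G\setminus C$ to swap the eigenlines is exactly where the Galois hypothesis on $F$ enters. The endgame for $|H|=2$ is also right: centrality of $H$ forces $\rho(G_K)$ into the scalars, whence $\chi_p(G_K)\subseteq(\F_p^\times)^2$, and comparing indices pins $K$ down as the unique quadratic subfield $\Q(\sqrt{p^*})$ of $\Q(\zeta_p)$, which is imaginary only for $p\equiv 3\pmod 4$, giving $p\mid D$.

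One cosmetic remark: in the last step you conclude ``$D=p$'', which presumes $D$ is squarefree. What you actually get is $\Q(\sqrt{-D})=\Q(\sqrt{-p})$, i.e.\ $D/p\in(\Q^\times)^2$, and this already forces $p\mid D$ regardless of whether $D$ is squarefree. With that adjustment the proof is complete.
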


To find all the number fields $K$ of prime degree such that $|X_0(32)(\Q)|<|X_0(32)(K)|<+\infty$, by Theorem \ref{tmd}, we just need to check the 8-division polynomial of $X_0(32)$ and determine $X_0(32)(\Q(i))$. We obtain that the only field $K$ with the desired property is $K=\Q(i)$, and
$$X_0(32)(\Q(i))_{tors} \simeq \Z / 2\Z \oplus \Z / 4\Z.$$
The rank of $X_0(32)(\Q(i))$ is 0, and hence we obtain that $Y_0(32)(\Q(i))$ has 4 points.

Similarly, for $X_0(36)$,  by Theorem \ref{tmd}, we need to check the 9-division polynomial and $X_0(36)(\Q(\sqrt{-3}))$. We obtain
$$X_0(36)(\Q(\sqrt{-3}))_{tors} \simeq \Z / 2\Z \oplus \Z / 6\Z$$
and $X_0(36)(K)_{tors}=X_0(36)(\Q)_{tors}$ for all other fields $K$ of prime degree. The rank of $X_0(36)(\Q(\sqrt{-3}))$ is 0, and hence $Y_0(36)(\Q(\sqrt{-3}))$ has 6 points.

Finally, for $X_0(49)$, by Theorem \ref{tmd}, we need to check the 4-division and 7-division polynomials and $X_0(49)(\Q(\sqrt{-7}))$.  We obtain
$$X_0(49)(\Q(\sqrt{-7}))_{tors} \simeq \Z / 2\Z \oplus \Z / 2\Z$$
and $X_0(49)(K)_{tors}=X_0(49)(\Q)_{tors}$ for all other fields $K$ of prime degree. The rank of $X_0(49)(\Q(\sqrt{-7}))$ is 0, and hence $Y_0(49)(\Q(\sqrt{-7}))$ has 2 points.
\end{proof}

\textbf{Acknowledgements}.

The author is grateful to Andrej Dujella, Ivica Gusi\'c and the anonymous referee for helpful comments and suggestions.

\small{DEPARTMENT OF MATHEMATICS,\\ UNIVERSITY OF ZAGREB,\\ BIJENI\v CKA CESTA 30,\\ 10000 ZAGREB, CROATIA}\\
\emph{E-mail:} fnajman@math.hr

\end{document}